\newcommand{\sumprime}{\if@display\sideset{}{'}\sum%
            \else\sum'\fi}
\begin{document}

\numberwithin{equation}{section}

% define theorem environments
\newtheorem{theorem}{Theorem}[section]
\newtheorem{proposition}[theorem]{Proposition}
\newtheorem{conjecture}[theorem]{Conjecture}
\def\theconjecture{\unskip}
\newtheorem{corollary}[theorem]{Corollary}
\newtheorem{lemma}[theorem]{Lemma}
\newtheorem{observation}[theorem]{Observation}
\newtheorem{definition}{Definition}
\numberwithin{definition}{section} %\def\thedefinition{\unskip}
\newtheorem{remark}{Remark}
\def\theremark{\unskip}
\newtheorem{kl}{Key Lemma}
\def\thekl{\unskip}
\newtheorem{question}{Question}
\def\thequestion{\unskip}
\newtheorem{example}{Example}
\def\theexample{\unskip}
\newtheorem{problem}{Problem}

\thanks{}

\address{Department of Mathematical Sciences, Fudan University, Shanghai, 200433, China}

\email{boychen@fudan.edu.cn}

\title{An $L^2$ Hartogs-type extension theorem for unbounded domains}
\author{Bo-Yong Chen}

\date{}

\begin{abstract}
In this note,  we prove an $L^2$ Hartogs-type extension theorem for unbounded domains. 
\end{abstract}

\maketitle

\section{Introduction}

In the landmark paper \cite{Hartogs},  Hartogs proved the following celebrated result.

\begin{theorem}[Hartogs,  1906]\label{th:Hartogs}
Let $\Omega$ be a domain in $\mathbb C^n$ with $n\ge 2$ and $E$ a compact subset in $\Omega$.  If $\Omega\backslash E$ is connected,  then every holomorphic function on $\Omega\backslash E$ can be extended holomorphically to $\Omega$.
\end{theorem}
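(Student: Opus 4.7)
The plan is to use Ehrenpreis's $\bar\partial$-method, which fits naturally with the $L^2$ point of view of the paper.

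First, I would pick $\chi \in C_c^\infty(\Omega)$ with $\chi \equiv 1$ on a neighborhood of $E$, and define a smooth function $v$ on $\Omega$ by $v := (1-\chi)f$ on $\Omega\setminus E$, extended by $0$ across $E$; these definitions agree on the interior of $\{\chi = 1\}$ since $1-\chi$ vanishes there. Then $v \in C^\infty(\Omega)$ coincides with $f$ on $\Omega \setminus \operatorname{supp}\chi$, and the $(0,1)$-form
\[
\omega := \dbar v = -f\,\dbar\chi
\]
is smooth, $\dbar$-closed, and compactly supported in a compact subset of $\Omega \setminus E$. Extend $\omega$ by zero to a form on $\mathbb{C}^n$.

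Second, I would solve $\dbar u = \omega$ on $\mathbb{C}^n$ by means of the one-variable Cauchy transform
\[
u(z_1,z') := \frac{1}{2\pi i}\int_{\C}\frac{\omega_1(\zeta,z')}{\zeta - z_1}\,d\zeta \wedge d\bar\zeta, \qquad z' = (z_2, \dots, z_n),
\]
where $\omega = \sum_j \omega_j\,d\bar z_j$; a direct computation using $\dbar\omega = 0$ gives $\dbar u = \omega$. Precisely because $n \geq 2$, there exists $R > 0$ such that $\omega_1(\cdot, z') \equiv 0$ whenever $|z'| > R$, which forces $u \equiv 0$ on the nonempty connected open set $\{|z'| > R\}$. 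Since $u$ is holomorphic on $\C^n \setminus \operatorname{supp}\omega$, the identity theorem then yields $u \equiv 0$ on the entire unbounded component $U$ of $\C^n \setminus \operatorname{supp}\omega$.

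Third, put $F := v - u$. Then $\dbar F = \omega - \omega = 0$ on $\Omega$, so $F \in \mathcal{O}(\Omega)$. Because $\operatorname{supp}\chi \subset\subset \Omega$, a portion of $\Omega \setminus \operatorname{supp}\chi$ near $\partial\Omega$ (or, if $\Omega$ is unbounded, any sufficiently far-out part of $\Omega$) lies in $U$; on that open piece $v = f$ and $u = 0$, so $F = f$. The identity theorem applied on the connected set $\Omega \setminus E$ then upgrades this local agreement to $F \equiv f$ throughout $\Omega \setminus E$, and $F$ is the desired holomorphic extension.

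The heart of the matter, and the only genuine obstacle, is the second step: the hypothesis $n \geq 2$ enters precisely in producing a solution $u$ that vanishes on a full open set. Cohomologically this is the vanishing $H^{0,1}_c(\C^n) = 0$ for $n \geq 2$, which has no counterpart when $n=1$ and is why the Hartogs phenomenon is genuinely higher-dimensional.
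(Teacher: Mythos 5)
Your argument is correct, but it takes a genuinely different route from the paper. The paper does not prove Theorem \ref{th:Hartogs} directly: it derives it as a corollary of Theorem \ref{th:Main}, by shrinking to a subdomain $\Omega'$ with $E\subset\Omega'\subset\subset\Omega$ (so that $f$ is square-integrable on the region where the cut-off is active) and taking $H=\{0\}$; there the $\bar{\partial}$-equation is solved by an $L^2$ existence theorem whose input is the Hardy inequality of Lemma \ref{lm:Hardy}, and the vanishing of the solution $u$ near infinity is extracted not from an explicit formula but from the $L^2$ Liouville theorem applied on a cylinder $l\times\mathbb B^{n-1}$ avoiding $\mathrm{supp}\,v$. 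You instead run the classical Ehrenpreis argument: the same cut-off decomposition $v=(1-\chi)f$, but with $\bar{\partial}u=\omega$ solved by the one-variable Cauchy transform in $z_1$, so that $u(\cdot,z')\equiv 0$ for $|z'|$ large is read off directly from the integral. Your approach is more elementary and self-contained for the compact statement and needs no integrability hypothesis on $f$; the paper's approach buys the quantitative $L^2$ bound and, crucially, the ability to handle unbounded $E$ lying near a high-codimension affine subspace, where no compactly supported cut-off exists. One small point in your third step deserves a sentence: when $\Omega$ is bounded it is not true that every boundary point of $\Omega$ sees the unbounded component $U$ of $\C^n\setminus\mathrm{supp}\,\omega$ (think of the inner boundary of a spherical shell), but a point $x_0\in\partial\Omega$ of maximal norm does, since the ray $\{tx_0: t>1\}$ avoids $\overline{\Omega}$; the ball $B(x_0,\delta)$ with $\delta=d(\partial\Omega,\mathrm{supp}\,\chi)$ then lies in $U$ and meets $\Omega\setminus\mathrm{supp}\,\chi$, which supplies the nonempty open set where $F=f$, and the identity theorem on the connected set $\Omega\setminus E$ finishes as you say.
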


There are at least three approaches for the Hartogs extension theorem.  The first one,  which is the original proof of Hartogs,  was completed only recently by Merker-Porton \cite{MP1}; the second one is based on the Bochner-Martinelli formula (cf.  \cite{Bochner3},  \cite{Martinelli}); the third one,  which is the most popular,  is by using the $\bar{\partial}-$method (cf.  \cite{Ehrenpreis}).    Generalizations to complex manifolds and complex spaces are also available (see e.g.,  \cite{CR},  \cite{MP2,Ohsawa,OV}).  We  refer to the  paper of Range \cite{Range} for  a very interesting historical recollection on this topic. 

The Hartogs extension phenomenon for the case when $E$ is an\/ {\it unbounded} closed subset seems to be more involved.   A  classic example in this direction is  the following tube theorem obtained by Bochner \cite{Bochner2}. 

\begin{theorem}[Bochner,  1938]\label{th:tube}
Every holomorphic function defined on the tube $D\times i\mathbb R^n$,  where $D$ is an open set in $\mathbb R^n$,  can be extended holomorphically to $(\text{convex hull of\ }D)\times i\mathbb R^n$. 
\end{theorem}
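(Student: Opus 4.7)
The plan is to combine two key features of the tube domain $T_D := D + i\R^n$: its invariance under the real $n$-parameter translation group $z \mapsto z + i\eta$ ($\eta \in \R^n$), and the fact that the target $T_{\mathrm{conv}(D)}$ is itself a convex open subset of $\C^n$, hence trivially a domain of holomorphy. Since the convex hull of an open set in $\R^n$ is the smallest convex open set containing it, it is enough to prove that the set of $x \in \R^n$ such that $f$ admits a holomorphic extension to a neighborhood of $\{x\} + i\R^n$ is convex; this in turn reduces to extending $f$ across the cylinder $[a,b] + i\R^n$ for any two points $a, b \in D$.

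The main mechanism for performing such extensions is the Hartogs continuity principle (\emph{Kontinuit\"atssatz}): if $\{\phi_t : \overline{\Delta} \to \C^n\}_{t \in [0,1]}$ is a continuous family of analytic disks with $\phi_t(\partial \Delta) \subset T_D$ for every $t$ and $\phi_0(\overline{\Delta}) \subset T_D$, then every holomorphic function on $T_D$ extends holomorphically to $\bigcup_t \phi_t(\overline{\Delta})$. I would construct a family of such disks whose images collectively cover a neighborhood of $[a,b] + iy_0$ for some fixed $y_0 \in \R^n$; the $y$-translation invariance of $T_D$ then automatically upgrades the extension to a neighborhood of all of $[a,b] + i\R^n$, and unique analytic continuation ensures the pieces fit together.

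To construct the disks concretely, I would first reduce to $n=2$ by restricting $T_D$ to a complex 2-plane containing the real segment $[a,b]$ together with a transverse complex direction. On the 2D tube over the resulting base $D_{12} \subset \R^2$ (which contains two small disks at the projected endpoints), the analytic disks can be built by composing a conformal parametrization from $\overline{\Delta}$ to a ``dumbbell''-shaped subregion of $\C$ (whose two bulbs correspond to neighborhoods of the two endpoints) with a holomorphic lift into $\C^2$; the imaginary degree of freedom in the transverse complex direction is used to confine the boundary values of the lift to $T_{D_{12}}$, while the interior of the disk crosses the gap.

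The main obstacle lies precisely in this disk construction. Because the boundary $\phi_t(\partial \Delta)$ is necessarily connected, it cannot straddle two disjoint balls around $a$ and $b$ inside $T_D$, so the construction must exploit the unboundedness of the imaginary directions---a feature absent in the classical bounded Hartogs figures---to fashion boundary curves that traverse $T_D$ along a single connected path while the disk interiors cover the missing segment. On the bright side, monodromy poses no difficulty: $T_{\mathrm{conv}(D)}$ deformation retracts onto the convex (and hence simply connected) base $\mathrm{conv}(D)$, so the local extensions obtained from different disks and different base points $y_0$ patch uniquely to a single global holomorphic extension on $T_{\mathrm{conv}(D)}$ by the identity principle.
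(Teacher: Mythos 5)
First, a point of comparison: the paper does not prove this statement at all --- Theorem \ref{th:tube} is quoted as a classical result of Bochner with a citation, and the paper's own work concerns Theorem \ref{th:Main} --- so your proposal can only be judged on its own terms. The strategy you describe (reduce to convexity of the set of base points over which $f$ extends, sweep the missing segment with a continuous family of analytic disks via the Kontinuit\"atssatz, use the imaginary translations and the simple connectivity of the convex target to globalize) is indeed the classical route to the tube theorem. But as written it is an outline rather than a proof: the analytic disks are the entire mathematical content of that route, and you explicitly defer their construction (``the main obstacle lies precisely in this disk construction''). Until the disks are exhibited, with boundaries verifiably inside $T_D$ and interiors verifiably covering a neighborhood of $[a,b]$, nothing has been established.

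Moreover, two of the reductions you do commit to would fail as stated. (i) Reducing to ``extend across $[a,b]+i\R^n$ for arbitrary $a,b\in D$'' overlooks that the theorem needs $D$ connected (with $D$ merely open, as quoted, it is false: a function equal to $0$ on one component of $T_D$ and $1$ on another cannot extend to the connected target). Even for connected $D$, the points $a$ and $b$ may only be joinable inside $D$ by a long detour; since $\mathrm{Re}$ maps $T_D$ onto $D$, tubes over disjoint pieces of the base are disjoint, so the unbounded imaginary directions cannot bridge a gap in the real part --- the connected boundary curves $\phi_t(\partial\Delta)$ are forced to follow the detour through $D$ itself, which your construction does not arrange. (ii) For the same reason the slicing to a complex $2$-plane through $a$, $b$ and one transverse direction is not harmless: the slice $D_{12}$ of a connected $D$ may consist of exactly the two small disks around $a$ and $b$, so $T_{D_{12}}$ is disconnected and no connected ``dumbbell'' boundary can lie inside it; no extension can then be produced within that slice. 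The standard repairs are to join $a$ to $b$ by a polygonal path in $D$ and induct on a triangle lemma (the two legs share a vertex, so the tube over a neighborhood of their union is connected and an explicit disc family exists), or, as in H\"ormander's treatment, to use polynomial maps $p:\C\to\C^n$ whose boundary values $\mathrm{Re}\,p(e^{i\theta})$ approximate a closed curve traversing such a path, so that the disc centres $p(0)$ sweep out the convex hull of the path. Some such construction must be supplied before the argument closes.
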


There are some generalizations  to certain "tube-like" domains (cf.  \cite{BDS1,BDS2}).  In particular,   \cite{BDS2}  indicates the complexity of  the Hartogs extension phenomenon for unbounded cases.    

 Actually,  Bochner had proved an $L^2$ version of Theorem \ref{th:tube} in  an earlier paper \cite{Bochner1},  which seems to be less known.

\begin{theorem}[Bochner,  1937]\label{th:Bochner37}
Every $L^2$ holomorphic function defined on  $D\times i\mathbb R^n$ can be extended to an $L^2$ holomorphic function on  $(\text{convex hull of\ }D)\times i\mathbb R^n$. 
\end{theorem}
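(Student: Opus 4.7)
The plan is to use the partial Fourier--Plancherel transform in the imaginary variables $y \in \R^n$ to obtain a Paley--Wiener-type representation of $f$, and then exploit convexity to extend. For simplicity assume $D$ is connected. First, for each $x \in D$, the sub-mean-value property applied to $|f|^2$ on a polydisc in $T_D$ shows that $f_x(y) := f(x+iy)$ lies in $L^2(\R^n_y)$, so its Fourier transform in $y$, call it $\hat f_x(\xi)$, is well-defined in $L^2(\R^n_\xi)$. The Cauchy--Riemann equations $\partial_{x_j}f = -i\partial_{y_j}f$, combined with the rule $\widehat{\partial_{y_j}f_x}(\xi) = i\xi_j\,\hat f_x(\xi)$, give (distributionally in $x$)
\[
\partial_{x_j}\hat f_x(\xi) \;=\; \xi_j\,\hat f_x(\xi),
\]
which, by connectedness of $D$, integrates to $\hat f_x(\xi) = e^{x\cdot\xi}\phi(\xi)$ for some measurable $\phi:\R^n\to\C$ independent of $x$. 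By Plancherel,
\[
(2\pi)^n\,\|f_x\|_{L^2(\R^n_y)}^2 \;=\; F(x) \;:=\; \int_{\R^n} e^{2x\cdot\xi}\,|\phi(\xi)|^2\,d\xi,
\]
and the key observation is that $F:\R^n\to[0,\infty]$ is convex (indeed log-convex by H\"older's inequality), so $\{F<\infty\}$ is convex. Since it contains $D$, it contains $\hat D := \mathrm{conv}(D)$.

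This delivers the extension: for $x \in \hat D$ the function $\xi \mapsto \phi(\xi)\,e^{x\cdot\xi}$ lies in $L^2(\R^n_\xi)$, and I would define $\tilde f(x+iy)$ as its inverse Fourier transform in $\xi$. Then $\tilde f|_{T_D} = f$ by Fourier inversion; $\tilde f$ is holomorphic on $T_{\hat D}$ because its integrand $e^{(x+iy)\cdot\xi}\phi(\xi)$ is entire in $z = x+iy$ (rigorously one verifies $\op\tilde f = 0$ in the distributional sense on $T_{\hat D}$ and then appeals to elliptic regularity); and Fubini identifies
\[
\|\tilde f\|_{L^2(T_{\hat D})}^2 \;=\; (2\pi)^{-n}\int_{\hat D}F(x)\,dx.
\]

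The main obstacle is making the Fourier--Laplace step rigorous: $\hat f_x$ is a priori only an $L^2_\xi$-valued function of $x$, so the ``ODE in $x$'' governing it must be derived distributionally, or one must first mollify $f$ in the $x$-direction to reduce to the smooth case before passing to the limit. A secondary but substantive subtlety is the global $L^2$ bound, i.e., showing $\int_{\hat D}F\,dx<\infty$, which is not automatic from the pointwise finiteness of $F$ on $\hat D$ when $\hat D$ is unbounded; writing (Fubini) $\int_D F = \int|\phi|^2\,I(\xi)\,d\xi$ with $I(\xi) := \int_D e^{2x\cdot\xi}\,dx$, and analogously $\int_{\hat D}F = \int|\phi|^2\,J(\xi)\,d\xi$ with $J(\xi) := \int_{\hat D}e^{2x\cdot\xi}\,dx$, one reduces to comparing $J$ with $I$ on the support of $\phi$. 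This comparison I would establish using the log-convexity of $F$ together with a Hadamard-three-lines style argument, covering $\hat D$ by segments whose endpoints lie in $D$.
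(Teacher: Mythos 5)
The paper does not actually prove Theorem \ref{th:Bochner37}: it is quoted from Bochner's 1937 paper as motivation, so there is no in-text argument to compare yours against. Your Fourier--Laplace strategy is the classical route, and most of it is sound: the interior sub-mean-value estimate does give $f_x\in L^2(\mathbb R^n_y)$, the representation $\hat f_x(\xi)=e^{x\cdot\xi}\phi(\xi)$ is correct (your caveat that $D$ must be connected is not a mere simplification --- for disconnected $D$ the tube is disconnected and the statement fails outright), and the log-convexity of $F(x)=\int e^{2x\cdot\xi}|\phi(\xi)|^2\,d\xi$ correctly yields pointwise finiteness of $F$, hence a holomorphic extension, on the tube over $\mathrm{conv}(D)$. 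The first difficulty you flag (deriving the ODE in $x$ rigorously) is only technical and is handled as you suggest.

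The genuine gap is the final $L^2$ bound, and the repair you sketch cannot work in the stated generality. Log-convexity gives the pointwise estimate $F(tx+(1-t)x')\le F(x)^tF(x')^{1-t}$ --- which is exactly why the $\sup_x\|f_x\|_{L^2_y}$ (Hardy-space) version of the theorem is immediate --- but to pass from $\int_DF<\infty$ to $\int_{\mathrm{conv}(D)}F<\infty$ you need, after Fubini, the weight comparison $J(\xi)\lesssim I(\xi)$ uniformly in $\xi$, and this fails for thin unbounded $D$. Concretely, take $n=2$ and $D=\{x\in\mathbb R^2:|x_2-x_1^2|<e^{-x_1^2}\}$, a connected sliver around a parabola. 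Then $I(\xi)=\int_De^{2x\cdot\xi}dx$ stays bounded for $\xi$ in $\{|\xi|\le 1,\ \xi_2\le 0\}$, whereas $\mathrm{conv}(D)$ contains the infinite-area region $\{x_2>x_1^2\}$, so $J(0,\xi_2)\ge\sqrt{\pi}\,(2|\xi_2|)^{-3/2}\to\infty$ as $\xi_2\uparrow 0$. Taking $\phi^2=\sum_kc_k\mathbf 1_{B_k}$ with $B_k$ small balls around $(0,-\epsilon_k)$, $\epsilon_k\downarrow0$, and masses chosen so that $\sum_kc_k|B_k|<\infty$ while $\sum_kc_k|B_k|\epsilon_k^{-3/2}=\infty$, one obtains an entire function $f(z)=\int e^{z\cdot\xi}\phi(\xi)\,d\xi$ with $\int_{T_D}|f|^2<\infty$ but $\int_{T_{\mathrm{conv}(D)}}|f|^2=\infty$; since the holomorphic extension to the connected tube $T_{\mathrm{conv}(D)}$ is unique, no $L^2$ extension exists. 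So no three-lines argument over segments can close this step: either an additional hypothesis (e.g.\ boundedness of $D$, or whatever is implicit in Bochner's original formulation) must be imposed to guarantee $J\lesssim I$, or the theorem should be proved in the $\sup_{x}$-norm formulation, where your log-convexity observation already finishes the proof.
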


Motivated by this theorem,  we shall prove the following $L^2$ Hartogs extension theorem.

\begin{theorem}\label{th:Main}
Let $\Omega$ be a domain in $\mathbb C^n$ and $E$ a closed set in $\mathbb C^n$ such that
\begin{enumerate}
\item[$(1)$] there exists $r>0$ such that $E_r:=\{z\in \mathbb C^n: d(z,E)\le r\}\subset \Omega$;
\item[$(2)$] there exist an affine-linear subspace $H\subset \mathbb R^{2n}=\mathbb C^n$ of real codimension $\ge 3$ and a number $R>0$ such that 
$$
E\subset H_R:=\{z\in \mathbb C^n: d(z,H)<R\};
$$
\item[$(3)$] $\Omega\backslash E$ is connected.
\end{enumerate}
Then every $L^2$ holomorphic function defined on $\Omega\backslash E$ can be extended to an $L^2$ holomorphic function on $\Omega$.
\end{theorem}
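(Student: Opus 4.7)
The plan is to adapt the $\bar\partial$-method for Hartogs extension (attributed to Ehrenpreis in the introduction) to the $L^2$ setting, using hypothesis (2) on the codimension of $H$ to control the support of the $\bar\partial$-solution. Let $f\in L^2(\Omega\setminus E)\cap\mathcal{O}(\Omega\setminus E)$ and choose $\chi\in C^\infty(\mathbb{C}^n)$ with $\chi\equiv 1$ in a neighborhood of $E$ and $\mathrm{supp}\,\chi\subset E_{r/2}\subset\Omega$ (possible by (1)). Then $F_0:=(1-\chi)f$, extended by zero across $E$, is a smooth $L^2(\Omega)$ function with $\bar\partial F_0=-\alpha$, where $\alpha:=f\,\bar\partial\chi$ is a smooth, $\bar\partial$-closed, $L^2$ $(0,1)$-form supported in the shell $\{0<\chi<1\}\subset E_{r/2}$; by (2), $\mathrm{supp}\,\alpha\subset H_{R+r/2}$. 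It suffices to produce $v\in L^2(\Omega)$ solving $\bar\partial v=\alpha$ with $v\equiv 0$ on some non-empty open $U\subset\Omega\setminus\mathrm{supp}\,\alpha$: then $F:=F_0+v$ is holomorphic and $L^2$ on $\Omega$, equals $f$ on $U$, and hence on all of $\Omega\setminus E$ by (3) and analytic continuation.

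To build $v$, extend $\alpha$ by zero to a $\bar\partial$-closed $L^2$ form $\tilde\alpha$ on $\mathbb{C}^n$ supported in $H_{R+r/2}$, and apply H\"ormander's $L^2$-theorem on the pseudoconvex $\mathbb{C}^n$ with a strictly plurisubharmonic weight $\phi$. A weight built from the distance to $H$ is natural: writing $H=\{\ell_1=\cdots=\ell_k=0\}$ for real-affine functionals with $k\geq 3$, the squared distance $d(\cdot,H)^2=\sum_i\ell_i^2$ is psh on $\mathbb{C}^n$ since $i\,\partial\bar\partial\ell_i^2=2i\,\partial\ell_i\wedge\bar\partial\ell_i\geq 0$, and one can take $\phi$ to be a convex increasing function of $d(\cdot,H)^2$ plus $\varepsilon|z|^2$. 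H\"ormander then produces $v$ on $\mathbb{C}^n$ with $\bar\partial v=\tilde\alpha$ and a weighted $L^2$-bound; in particular $v$ is holomorphic on $\mathbb{C}^n\setminus H_{R+r/2}$.

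The crux, and the main expected obstacle, is showing that this $v$ vanishes on some non-empty open subset of $\Omega\setminus\mathrm{supp}\,\alpha$. In the classical compact-$E$ Hartogs proof, this step uses $H^{0,1}_c(\mathbb{C}^n)=0$ for $n\geq 2$ to force $v$ compactly supported; here one needs the tube-supported analogue: every $\bar\partial$-closed weighted-$L^2$ $(0,1)$-form with support in a tube $H_{R+r/2}$ around a real-codim-$\geq 3$ affine subspace $H$ is $\bar\partial$ of a weighted-$L^2$ function with support in a slightly larger tube $H_{R''}$. The hypothesis $\mathrm{codim}_\mathbb{R}H\geq 3$ is exactly what is required for this: it makes $\mathbb{C}^n\setminus H$ connected and simply-connected and leaves sufficient ``complex transverse room'' for a Hartogs/Liouville-type argument to force tube support. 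Concretely, one expects to use either an explicit Bochner--Martinelli-type kernel adapted to the tube (so that $v$ is realized as an integral transform of $\tilde\alpha$ which manifestly vanishes outside a tube by support and decay considerations of the kernel) or a duality argument invoking the triviality of certain spaces of holomorphic $L^2$-like functions on the tube complement under the codimension hypothesis. Granting this tube-support vanishing, one chooses $R''$ slightly above $\sup_{z\in\mathrm{supp}\,\alpha}d(z,H)$ and takes $U$ to be a non-empty open subset of $\Omega\setminus H_{R''}$; such a $U$ exists because $E_r\subset\Omega$ contains points with $d(z,H)$ reaching up to $R+r$ (taking $R$ tight in (2)), so $\Omega\setminus H_{R''}\neq\emptyset$. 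The reduction step then yields the desired extension $F$, so that establishing the tube-support vanishing is the sole remaining technical hurdle.
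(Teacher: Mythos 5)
Your reduction (cut off $f$ near $E$, solve $\bar\partial v=\alpha$ for the resulting $\bar\partial$-closed $(0,1)$-form supported in a shell inside $E_r\subset H_{R+r}$, then show the solution vanishes on a non-empty open set so that analytic continuation applies) is exactly the skeleton of the paper's proof. But the step you yourself flag as ``the sole remaining technical hurdle'' --- forcing the solution to vanish somewhere --- is the entire content of the theorem, and your proposal does not close it. Worse, the route you sketch would not close it: if you solve with H\"ormander's theorem using a weight of the form $\varphi(d_H^2)+\varepsilon|z|^2$, the solution $v$ is only controlled in $L^2(e^{-\varepsilon|z|^2-\varphi})$, a space containing all polynomials and much more; restricting such a $v$ to a cylinder in $\mathbb C^n\setminus H_{R+r}$ gives a holomorphic function about which no Liouville-type conclusion can be drawn, so one cannot see that $v$ vanishes anywhere. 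Your conjectured ``tube-support vanishing'' lemma is in effect a restatement of the theorem, and neither of the two methods you gesture at (a Bochner--Martinelli kernel adapted to the tube, or an unspecified duality) is carried out. Note also that the role of $\mathrm{codim}\,H\ge 3$ is not the topology of $\mathbb C^n\setminus H$, as you suggest.

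The missing idea is an \emph{unweighted} $L^2$ a priori estimate coming from the Hardy inequality. Since $m=\mathrm{codim}\,H\ge 3$, one has $\frac{(m-2)^2}{4}\int\phi^2/d_H^2\le\int|\nabla\phi|^2$, which is non-trivial precisely when $m\ge 3$; via the duality argument of Theorem \ref{th:L2} and Corollary \ref{cor:L2} this produces a solution $u$ of $\bar\partial u=v$ with $\int_{\mathbb C^n}|u|^2\le \frac{16}{(m-2)^2}\int_{\mathbb C^n}|v|^2 d_H^2$, the right-hand side being finite because $\mathrm{supp}\,v\subset E_r\subset H_{R+r}$. Now take a real hyperplane $\mathcal H$ at positive distance from the convex set $H_{R+r}$; it contains a complex line $l$ (as $n\ge 2$), and $u$ restricted to the cylinder $l\times\mathbb B^{n-1}\subset\mathbb C^n\setminus E_r$ is holomorphic and $L^2$, so each slice $u(z',\cdot)$ lies in $A^2(\mathbb C)=\{0\}$. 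Unique continuation then gives $u=0$ on the unbounded component of $\mathbb C^n\setminus E_r$, which meets $\Omega\setminus E$, and the proof concludes exactly as in your reduction. Without the unweighted global $L^2$ bound on the solution, the Liouville step --- and hence the whole vanishing argument --- cannot be carried out.
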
 

Theorem \ref{th:Hartogs} follows directly from Theorem \ref{th:Main}.  To see this,  first take a domain $\Omega':E\subset \Omega'\subset\subset \Omega$ and $H=\{0\}$,  then apply Theorem \ref{th:Main} to the pair $(\Omega',E)$.  It is also easy to see that Theorem \ref{th:Main} contains some special cases of 
Theorem \ref{th:Bochner37},  e.g.,  $D=D'\backslash E$ where $D'$ is a  convex domain in $\mathbb R^n$ and $E$ is a compact subset in $D'$.   

The proof of Theorem \ref{th:Main} relies heavily on the classic Hardy inequality,  which also reveals that the basic reason for the Hartogs extension phenomenon is nothing but the non-parabolicity of $\mathbb C^n=\mathbb R^{2n}$ when $n\ge 2$.

\section{An $L^2-$estimate for the $\bar{\partial}-$equation in $\mathbb C^n$}

Let $\nabla$ and $\Delta$ denote the standard gradient and real Laplacian.  We shall prove the following

\begin{theorem}[compare \cite{Chen}]\label{th:L2}
Suppose that there exists a  measurable function $\omega\ge 0$ on $\mathbb C^n$ such that 
\begin{equation}\label{eq:Hardy}
\int_{\mathbb C^n} \phi^2 \omega\le \int_{\mathbb C^n} |\nabla \phi|^2
\end{equation}
holds for any real-valued smooth function $\phi$ with compact support in $\mathbb C^n$.  Then for any  $\bar{\partial}-$closed $(0,q)-$form $v$ on $\mathbb C^n$ with $\int_{\mathbb C^n} |v|^2<\infty$ and $\int_{\mathbb C^n} |v|^2/\omega<\infty$,  there exists a $(0,q-1)-$form $u$ on $\mathbb C^n$ such that  $\bar{\partial}u=v$ and
$$
\int_{\mathbb C^n} |u|^2 \le 4\int_{\mathbb C^n} |v|^2/\omega.
$$ 
\end{theorem}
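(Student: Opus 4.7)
The plan is to combine the scalar Hardy inequality (\ref{eq:Hardy}) with the Bochner-Kodaira identity on flat $\mathbb{C}^n$ to obtain a weighted Hardy-type inequality for $(0,q)$-forms, and then feed this into H\"ormander's abstract $L^2$ existence theorem for $\bar\partial$.

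First, applying (\ref{eq:Hardy}) separately to $\mathrm{Re}\,\phi$ and $\mathrm{Im}\,\phi$ shows that the same inequality holds for every complex-valued $\phi\in C_0^\infty(\mathbb{C}^n)$. Next, for a smooth compactly supported $(0,q)$-form $w=\sum_{|I|=q} w_I\,d\bar z_I$ on flat $\mathbb{C}^n$, the Bochner-Kodaira identity for the trivial Hermitian bundle (pure integration by parts, since the complex Laplacian reduces to $-\tfrac14\Delta$ on each component) gives
\[
\|\bar\partial w\|^2+\|\bar\partial^* w\|^2 \;=\; \tfrac14\int_{\mathbb{C}^n} |\nabla w|^2, \qquad |\nabla w|^2 := \sum_I |\nabla w_I|^2.
\]
Applying the complex scalar Hardy inequality to each component $w_I$ and summing then yields the Hardy inequality for forms,
\[
\int_{\mathbb{C}^n} \omega\,|w|^2 \;\le\; \int_{\mathbb{C}^n} |\nabla w|^2 \;=\; 4\bigl(\|\bar\partial w\|^2+\|\bar\partial^* w\|^2\bigr).
\]

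By the standard Friedrichs density lemma on $\mathbb{C}^n$ (cutoff by radial bump functions on an expanding sequence of balls), smooth compactly supported forms are dense in $\mathrm{dom}(\bar\partial)\cap \mathrm{dom}(\bar\partial^*)$ in the graph norm, so the inequality extends to every $w$ in that domain. Cauchy-Schwarz then gives, for every such $w$,
\[
|\langle v,w\rangle|^2 \;\le\; \Bigl(\int_{\mathbb{C}^n}|v|^2/\omega\Bigr)\Bigl(\int_{\mathbb{C}^n}\omega|w|^2\Bigr) \;\le\; 4\Bigl(\int_{\mathbb{C}^n}|v|^2/\omega\Bigr)\bigl(\|\bar\partial w\|^2+\|\bar\partial^* w\|^2\bigr),
\]
with the convention $|v|^2/\omega=+\infty$ on $\{\omega=0,\,v\ne 0\}$ (so that the hypothesis forces $v$ to vanish wherever $\omega$ does). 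H\"ormander's $L^2$ existence theorem, applied with $T=\bar\partial$ on $(0,q-1)$-forms and $S=\bar\partial$ on $(0,q)$-forms, finally produces $u$ with $\bar\partial u=v$ and $\int|u|^2\le 4\int|v|^2/\omega$.

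The conceptual core of the argument is essentially free: on flat $\mathbb{C}^n$ the operator $\bar\partial$ decouples into scalar Cauchy-Riemann operators on the components of a $(0,q)$-form, so the form-valued Poincar\'e/Hardy inequality needed to run H\"ormander's machinery reduces component by component to the scalar hypothesis (\ref{eq:Hardy}). The main technical point I anticipate is the density step transferring the inequality from $C_0^\infty$ to $\mathrm{dom}(\bar\partial)\cap\mathrm{dom}(\bar\partial^*)$, which must be carried out carefully since $\omega$ is only assumed measurable; however, this is standard on $\mathbb{C}^n$ thanks to the constant-coefficient first-order structure of $\bar\partial$ and $\bar\partial^*$, and the convergence of the weighted terms follows from Fatou's lemma once one has uniform control of $\|\nabla w_k\|$ along a cutoff sequence.
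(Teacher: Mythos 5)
Your proposal is correct and follows essentially the same route as the paper: the scalar Hardy inequality is applied to real and imaginary parts of each component, the flat-space identity $\|\bar\partial w\|^2+\|\bar\partial^* w\|^2=\frac14\int|\nabla w|^2$ (i.e.\ $\Box=-\frac14\Delta$ componentwise) converts this into the weighted a priori estimate, the graph-norm density of compactly supported forms plus Fatou extends it to $\mathrm{Dom}\,\bar\partial\cap\mathrm{Dom}\,\bar\partial^*$, and the duality/Hahn--Banach argument (H\"ormander's existence theorem) produces $u$ with the stated bound. Your explicit remark about the convention $|v|^2/\omega=+\infty$ on $\{\omega=0,\ v\ne 0\}$ is a point the paper leaves implicit, but it does not change the argument.
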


\begin{remark}
 \eqref{eq:Hardy} is usually called a Hardy-type inequality in literature (the special case when $\omega=\frac{(n-1)^2}{|z|^2}$ is the standard Hardy inequality),  which is of particular importance in real analysis and partial differential equations. 
\end{remark}

\begin{proof}
Let $\phi=\phi_1+i\phi_2$,  where $\phi_1,\phi_2$ are real-valued smooth functions with compact supports in $\mathbb C^n$.  By \eqref{eq:Hardy} we have 
\begin{eqnarray}\label{eq:dbar_1}
\int_{\mathbb C^n} |\phi|^2 \omega & = & \int_{\mathbb C^n} \phi_1^2\, \omega + \int_{\mathbb C^n} \phi_2^2\, \omega \nonumber\\
& \le & \int_{\mathbb C^n} \left(|\nabla \phi_1|^2 + |\nabla \phi_2|^2 \right)\nonumber\\
& = & - \int_{\mathbb C^n}\left(\phi_1\Delta\phi_1+\phi_2\Delta \phi_2\right)\ \ \ (\text{Stokes formula})\nonumber\\
& = & - \int_{\mathbb C^n}\phi\, \overline{\Delta \phi},
\end{eqnarray}
where the last equality follows from
$$
\int_{\mathbb C^n} \phi_1\Delta\phi_2=-\int_{\mathbb C^n} \nabla \phi_1\cdot \nabla \phi_2= 
\int_{\mathbb C^n} \phi_2\Delta\phi_1.
$$
Let $\mathcal D_{(0,q)}(\mathbb C^n)$ denote the set of smooth $(0,q)-$forms with compact supports in $\mathbb C^n$ and $L^2_{(0,q)}(\mathbb C^n)$ the completion of $\mathcal D_{(0,q)}(\mathbb C^n)$ with respect to the standard $L^2$ norm.  Let $\vartheta$ be the formal adjoint of $\bar{\partial}$ and $\Box:=\bar{\partial}\vartheta+\vartheta\bar{\partial}$ the complex Laplacian.  In what follows we shall use  standard terminologies in H\"ormander's classic book \cite{Hormander}.  For any $u=\sum'_{I,J}u_{IJ} dz_I\wedge d\bar{z}_J\in \mathcal D_{(0,q)}(\mathbb C^n)$,  we infer from \eqref{eq:dbar_1} that
\begin{eqnarray}\label{eq:dbar_3}
\int_{\mathbb C^n} |u|^2 \omega & \le & -{\sum_{I,J}}' \int_{\mathbb C^n} u_{IJ}\overline{\Delta u_{IJ}}\nonumber\\
& = & -4 (u,\Box u)\nonumber\\
& = & 4\left(\|\bar{\partial} u\|^2+\|\vartheta u\|^2 \right).
\end{eqnarray}
Note  that $\bar{\partial}: L^2_{(0,q)}(\mathbb C^n)\rightarrow L^2_{(0,q+1)}(\mathbb C^n)$ gives a  densely defined and closed operator.  Let $\bar{\partial}^\ast$ be the Hilbert space adjoint of $\bar{\partial}$.  It is well-known that $\mathcal D_{(0,q)}(\mathbb C^n)$ lies dense in $\mathrm{Dom\,}\bar{\partial}\cap \mathrm{Dom\,}\bar{\partial}^\ast$ under the following graph norm:
$$
u\mapsto \|u\| + \|\bar{\partial}u\|+\|\bar{\partial}^\ast u\|.
$$  
Thus for any $u\in \mathrm{Dom\,}\bar{\partial}\cap \mathrm{Dom\,}\bar{\partial}^\ast$ there exists a sequence $\{u_j\}\subset \mathcal D_{(0,q)}(\mathbb C^n)$ such that $u_j\rightarrow u$ under the graph norm.  Replace $\{u_j\}$ by a subsequence,  we may assume furthermore that $u_j\rightarrow u$ a.e.  in $\mathbb C^n$.  It follows from Fatou's lemma that 
\begin{eqnarray}\label{eq:dbar_2}
\int_{\mathbb C^n} |u|^2 \omega & \le & \liminf_{j\rightarrow \infty} \int_{\mathbb C^n} |u_j|^2 \omega\nonumber\\
& \le & 4 \liminf_{j\rightarrow \infty}\left(\|\bar{\partial} u_j\|^2+ \|\vartheta u_j\|^2\right)\nonumber\\
& = & 4 \left(\|\bar{\partial} u\|^2+ \|\bar{\partial}^\ast u\|^2\right).
\end{eqnarray}
Now we can apply the standard duality argument.  Consider the mapping
$$
T: \bar{\partial}^\ast w\mapsto (w,v),\ \ \ w\in \mathrm{Dom\,}\bar{\partial}^\ast\cap \mathrm{Ker\,}\bar{\partial}.
$$
The Cauchy-Schwarz inequality gives
\begin{eqnarray}\label{eq:dbar_3}
|(w,v)|^2 & \le & \int_{\mathbb C^n} |w|^2 \omega\cdot \int_{\mathbb C^n} |v|^2/\omega\nonumber\\
& \le & 4\|\bar{\partial}^\ast w\|^2 \cdot \int_{\mathbb C^n} |v|^2/\omega
\end{eqnarray}
in view of \eqref{eq:dbar_2}.  Thus $T$ is a well-defined continuous linear functional on $\mathrm{Dom\,}\bar{\partial}^\ast\cap \mathrm{Ker\,}\bar{\partial}$ with
\begin{equation}\label{eq:dbar_4}
\|T\|^2 \le 4\int_{\mathbb C^n} |v|^2/\omega.
\end{equation}
Since $v\in \mathrm{Ker\,}\bar{\partial}$,  we have $(w,v)=0$ for all $w\bot \mathrm{Ker\,}\bar{\partial}$,  so that $T$ extends to a continuous linear functional on the range of $\bar{\partial}^\ast$ which still satisfies \eqref{eq:dbar_4}.  The Hahn-Banach theorem combined with the Riesz representation theorem gives a unique $u\in L^2_{(0,q-1)}(\mathbb C^n)$ such that $\|u\|\le 2 \|v/\sqrt{\omega}\|$ and 
$$
(w,v)=(\bar{\partial}^\ast w,u),\ \ \ w\in \mathrm{Dom\,}\bar{\partial}^\ast,
$$
i.e.,  $\bar{\partial} u=v$ holds in the sense of distributions.
\end{proof}

Let us recall the following classic result.

\begin{lemma}[Hardy inequality]\label{lm:Hardy}
Let $H\subset \mathbb R^{N}$ be an affine-linear subspace with codimension   $m\ge 3$.   Define $d_H=d(\cdot,H)$.  Then we have
$$
\frac{(m-2)^2}4 \int_{\mathbb R^{N}} \phi^2/d_H^2 \le \int_{\mathbb R^{N}} |\nabla \phi|^2
$$
for any smooth real-valued function with compact support in $\mathbb R^{N}$.
\end{lemma}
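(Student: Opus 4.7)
The plan is to reduce the statement to the standard Hardy inequality on $\mathbb{R}^m$ and then prove the latter by a divergence-theorem argument with the radial vector field $x/|x|^2$.

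First I would exploit the invariance of both integrals under translations and orthogonal transformations of $\mathbb{R}^N$ to assume that $H=\{0\}\times \mathbb{R}^{N-m}$, so that $d_H(x,y)=|x|$ for $(x,y)\in\mathbb{R}^m\times\mathbb{R}^{N-m}$. Writing $\nabla=(\nabla_x,\nabla_y)$, we have $|\nabla\phi|^2\ge |\nabla_x\phi|^2$, so by Fubini's theorem it suffices to establish, for almost every fixed $y$, the pointwise-in-$y$ inequality
$$
\frac{(m-2)^2}{4}\int_{\mathbb{R}^m}\phi(x,y)^2/|x|^2\,dx\le \int_{\mathbb{R}^m}|\nabla_x\phi(x,y)|^2\,dx.
$$
This reduces the claim to the classical Hardy inequality on $\mathbb{R}^m$ for compactly supported smooth functions.

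Second, to prove the Hardy inequality on $\mathbb{R}^m$ with $m\ge 3$, I would use the vector field $X(x)=x/|x|^2$, which is smooth on $\mathbb{R}^m\setminus\{0\}$ and satisfies $\mathrm{div}\,X=(m-2)/|x|^2$. Given a smooth compactly supported $\psi$ on $\mathbb{R}^m$, multiply a cutoff version $\chi_\epsilon$ (equal to $1$ off a ball of radius $\epsilon$ about the origin and $0$ near the origin) into $\psi^2 X$ and integrate by parts to obtain
$$
(m-2)\int_{\mathbb{R}^m}\chi_\epsilon\,\psi^2/|x|^2\,dx=-2\int_{\mathbb{R}^m}\chi_\epsilon\,\psi\,\nabla\psi\cdot\frac{x}{|x|^2}\,dx-\int_{\mathbb{R}^m}\psi^2\,\nabla\chi_\epsilon\cdot\frac{x}{|x|^2}\,dx.
$$
Since $m\ge 3$, the boundary term and the integrals over $\{|x|\le \epsilon\}$ are $O(\epsilon^{m-2})$ and vanish as $\epsilon\to 0$, leaving
$$
(m-2)\int_{\mathbb{R}^m}\psi^2/|x|^2\,dx=-2\int_{\mathbb{R}^m}\psi\,\nabla\psi\cdot\frac{x}{|x|^2}\,dx.
$$
Now I would apply Cauchy--Schwarz to the right-hand side, bounding it by $2\bigl(\int\psi^2/|x|^2\bigr)^{1/2}\bigl(\int|\nabla\psi|^2\bigr)^{1/2}$, and squaring yields the desired inequality with the sharp constant $(m-2)^2/4$.

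The only subtlety is the justification of the integration by parts near the origin, which is precisely where the hypothesis $m\ge 3$ enters: it is what makes $1/|x|^2$ locally integrable in an appropriate sense and kills the boundary contribution from $\partial B_\epsilon(0)$. This is the one step to handle with care, but it is routine once the cutoff argument is set up.
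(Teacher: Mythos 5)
Your proof is correct, including the two points that need care: the Fubini reduction is legitimate because $|\nabla\phi|^2\ge|\nabla_x\phi|^2$ and $\phi(\cdot,y)$ is smooth with compact support in $\mathbb R^m$ for each $y$, and the cutoff bookkeeping near the origin is right ($1/|x|^2$ is locally integrable and the commutator term with $\nabla\chi_\epsilon$ is $O(\epsilon^{m-2})$, both of which use $m\ge 3$). The route is genuinely different from the paper's, though. The paper does not slice: it works directly on $\mathbb R^N$ with the auxiliary function $\psi=-d_H^{2-m}$, which is harmonic off $H$ and subharmonic on all of $\mathbb R^N$ as a function of the normal variables alone; starting from $0\le\int\phi^2\,\Delta\psi/(-\psi)$ and integrating by parts, the weight appears as the squared logarithmic derivative $|\nabla\psi|^2/\psi^2=(m-2)^2 d_H^{-2}$, and the cross term is absorbed by Young's inequality $2ab\le\tfrac12 a^2+2b^2$. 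Your argument instead uses the field $X=\nabla\log|x|=x/|x|^2$, so the weight is produced by $\operatorname{div}X=(m-2)/|x|^2$, and you finish by Cauchy--Schwarz and division; the constants come out the same. Each version has something to recommend it: yours is the more scrupulous about the singularity along $H$ (the paper's integration by parts near $\{d_H=0\}$ is left implicit, whereas your $\chi_\epsilon$ makes it airtight), while the paper's choice of the Newtonian-type potential $-d_H^{2-m}$ is what motivates the remark in the introduction that the Hartogs phenomenon here is ultimately the non-parabolicity of $\mathbb R^{2n}$ relative to $H$ --- a conceptual point that the $x/|x|^2$ computation hides.
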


\begin{proof}
For the sake of completeness,  we still provide a proof here.  We may assume $H=\{x'=0\}$ where $x'=(x_1,\cdots,x_m)$.  Then we have $d_H(x)=|x'|$ and the function $\psi(x)=\psi(x')=-|x'|^{2-m}$ is subharmonic on $\mathbb R^m$,  hence is subharmonic on $\mathbb R^N$ (in particular,  $\mathbb R^N$ is non-parabolic).  Thus
$$
0\le \int_{\mathbb R^N} \phi^2\cdot \frac{\Delta \psi}{-\psi}=- \int_{\mathbb R^N} \nabla \psi \cdot \nabla\left(\frac{\phi^2}{-\psi}\right)=2 \int_{\mathbb R^N} \frac{\phi}{\psi}\cdot\nabla \psi\cdot\nabla \phi-\int_{\mathbb R^N} \frac{\phi^2}{\psi^2}\cdot |\nabla \psi|^2,
$$
so that
\begin{eqnarray*}
\int_{\mathbb R^N} \frac{\phi^2}{\psi^2}\cdot |\nabla \psi|^2 & \le & -2\int_{\mathbb R^N} \frac{\phi}{\psi}\cdot\nabla \psi\cdot\nabla \phi\\
& \le & \frac12 \int_{\mathbb R^N} \frac{\phi^2}{\psi^2}\cdot |\nabla \psi|^2 + 2\int_{\mathbb R^N} |\nabla \phi|^2,
\end{eqnarray*}
that is
$$
\int_{\mathbb R^N} \frac{\phi^2}{\psi^2}\cdot |\nabla \psi|^2\le 4\int_{\mathbb R^N} |\nabla \phi|^2.  
$$
On the other hand,  a straightforward calculation gives $|\nabla \psi|^2/\psi^2=(m-2)^2 |x'|^{-2}$,  hence we are done.
\end{proof}

Theorem \ref{th:L2} combined with Lemma \ref{lm:Hardy} immediately yields

\begin{corollary}\label{cor:L2}
Let $H\subset \mathbb R^{2n}=\mathbb C^n$ be an affine-linear subspace with codimension   $m\ge 3$.  Then for any  $\bar{\partial}-$closed $(0,q)-$form $v$ on $\mathbb C^n$ with $\int_{\mathbb C^n} |v|^2<\infty$ and $\int_{\mathbb C^n} |v|^2 d_H^2<\infty$,  there exists a $(0,q-1)-$form $u$ on $\mathbb C^n$ such that  $\bar{\partial}u=v$ and
$$
\int_{\mathbb C^n} |u|^2 \le \frac{16}{(m-2)^2} \int_{\mathbb C^n} |v|^2 d_H^2.
$$ 
\end{corollary}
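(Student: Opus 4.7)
The plan is to invoke Theorem \ref{th:L2} with the weight function supplied by Lemma \ref{lm:Hardy}. Concretely, I would set
$$
\omega := \frac{(m-2)^2}{4\, d_H^2},
$$
which is a nonnegative measurable function on $\mathbb C^n$. Lemma \ref{lm:Hardy} applied with $N=2n$ says exactly that $\int_{\mathbb C^n}\phi^2\,\omega\le \int_{\mathbb C^n}|\nabla\phi|^2$ for every real-valued $\phi\in C^\infty_c(\mathbb C^n)$, so the Hardy-type hypothesis \eqref{eq:Hardy} of Theorem \ref{th:L2} is verified.

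Next I would check that $v$ meets the integrability conditions required by Theorem \ref{th:L2}. The assumption $\int_{\mathbb C^n}|v|^2<\infty$ is given, and
$$
\int_{\mathbb C^n} \frac{|v|^2}{\omega} \;=\; \frac{4}{(m-2)^2}\int_{\mathbb C^n} |v|^2\, d_H^2 \;<\;\infty
$$
by hypothesis. Theorem \ref{th:L2} then yields a $(0,q-1)$-form $u$ on $\mathbb C^n$ with $\bar\partial u = v$ and
$$
\int_{\mathbb C^n}|u|^2 \;\le\; 4\int_{\mathbb C^n}\frac{|v|^2}{\omega} \;=\; \frac{16}{(m-2)^2}\int_{\mathbb C^n}|v|^2\, d_H^2,
$$
which is the desired estimate.

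I do not expect any real obstacle; the corollary is a direct composition of the two preceding results. The only minor point worth mentioning is that $\omega=+\infty$ on the affine subspace $H$ itself, but $H$ has real codimension $m\ge 3$ and hence Lebesgue measure zero in $\mathbb R^{2n}$, so the integrals involving $\omega$ and $1/\omega$ are unaffected, and $|v|^2/\omega$ is interpreted as $0$ on $H$ in the usual way. Everything else is a routine application of the machinery already developed in Theorem \ref{th:L2} and Lemma \ref{lm:Hardy}.
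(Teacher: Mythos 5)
Your proof is correct and is exactly the intended argument: the paper simply states that Corollary \ref{cor:L2} follows immediately from Theorem \ref{th:L2} with the weight $\omega=(m-2)^2/(4d_H^2)$ supplied by Lemma \ref{lm:Hardy}, and your constant-tracking $4\cdot\frac{4}{(m-2)^2}=\frac{16}{(m-2)^2}$ matches. The remark about $\omega=+\infty$ on the measure-zero set $H$ is a sensible (if unnecessary) precaution.
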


\section{Proof of Theorem \ref{th:Main}}
Set $d_E(z):=d(z,E)$ and $d_H(z):=d(z,H)$.  Choose a smooth function $\chi:\mathbb R\rightarrow [0,1]$ such that $\chi|_{(-\infty,1/2]}=0$ and $\chi|_{[1,\infty)}=1$.  Given $f\in A^2(\Omega\backslash E):=L^2\cap \mathcal O(\Omega\backslash E)$,  define $v:=\bar{\partial}\{\chi(d_E/r)f\}$ on $\Omega\backslash E$ and $v=0$ on $E\cup(\mathbb C^n\backslash \Omega)$.  Clearly,  $v$ is a smooth $\bar{\partial}-$closed $(0,1)-$form on $\mathbb C^n$.  Moreover,  since $|\nabla d_E|\le 1$ a.e. ,  we have
$$
\int_{\mathbb C^n} |v|^2\le \frac{\sup|\chi'|^2}{r^2}\cdot \int_{r/2\le d_E\le r} |f|^2\le \frac{\sup|\chi'|^2}{r^2}\cdot \int_{\Omega\backslash E} |f|^2<\infty,
$$
and since $E\subset H_R$,  it follows that
\begin{eqnarray*}
\int_{\mathbb C^n} |v|^2 d_H^2 & \le & \frac{\sup|\chi'|^2}{r^2}\cdot \int_{r/2\le d_E\le r} |f|^2d_H^2\\
& \le &  \frac{(R+r)^2}{r^2}\cdot \sup|\chi'|^2\cdot  \int_{\Omega\backslash E} |f|^2<\infty.
\end{eqnarray*}
Thanks to Corollary \ref{cor:L2},  we obtain a solution of $\bar{\partial}u=v$ which satisfies
$$
\int_{\mathbb C^n} |u|^2 \lesssim \int_{\Omega\backslash E} |f|^2.
$$
Since $\mathrm{supp\,}v\subset E_r$,  we conclude that $u\in \mathcal O(\mathbb C^n\backslash E_r)$.  Also,  since $H_{R+r}\subsetneq \mathbb C^n$ is convex,  so there exists a real hyperplane $\mathcal H$ in $\mathbb C^n$ such that $d(\mathcal H,H_{R+r})>1$.  Since $n\ge 2$,  so $\mathcal H$ contains at least one complex line $l$.  Without loss of generality,  we assume  $l=\{z'=0\}$ where $z'=(z_1,\cdots,z_{n-1})$.  Thus the cylinder $\mathcal C:=l\times \mathbb B^{n-1}\subset \mathbb C^n\backslash H_{R+r}\subset \mathbb C^n\backslash E_r$,  where $\mathbb B^{n-1}$ is the unit ball in $\mathbb C^{n-1}$.  Now $u\in A^2(\mathcal C)$,  so $u(z',\cdot)\in A^2(\mathbb C)$ for every $z'\in \mathbb B^{n-1}$,  and it has to vanish in view of the ($L^2$) Liouville theorem.    By the theorem of unique continuation,  $u=0$ in an unbounded component of $\mathbb C^n\backslash E_r$,  which naturally intersects with $\Omega\backslash E$.    Finally,  the function $F:=\chi(d_E/r)f-u$ is holomorphic on $\Omega$ and satisfies $F=f$ on a nonempty open subset in $\Omega\backslash E$.  Since $\Omega\backslash E$ is connected,  it follows that $F=f$ on $\Omega\backslash E$.  Clearly,
$$
\int_\Omega |F|^2 \le 2\int_\Omega |\chi(d_E/r)f|^2 + 2\int_\Omega |u|^2\le \int_{\Omega\backslash E} |f|^2 + 2\int_{\mathbb C^n} |u|^2<\infty.
$$

\end{document}